\newtheorem{theorem}{Theorem}
\newtheorem{lemma}{Lemma}
\newtheorem{remark}{Remark}
\newtheorem{definition}{Definition}
\newtheorem{example}{Example}
\begin{document}

\title{On Intersections of Conjugate Subgroups}
\author{Rita Gitik}
\email{ritagtk@umich.edu}
\address{ Department of Mathematics \\ University of Michigan \\ Ann Arbor, MI, 48109}  

\date{\today}

\begin{abstract} We define a new invariant of  a conjugacy class of subgroups which we call the weak width and prove that a quasiconvex subgroup of a negatively curved group has finite weak width in the ambient group.
Utilizing the coset graph and the geodesic core of a subgroup we give an explicit algorithm for constructing a finite generating set for an intersection of a quasiconvex subgroup of a negatively curved group with a conjugate. Using that algorithm we construct  algorithms for computing the weak width, the width and the height of a quasiconvex subgroup of a negatively curved group. These algorithms decide if a quasiconvex subgroup of a negatively curved group is  almost malnormal in the ambient group.
We also explicitely compute a quasiconvexity constant of the intersection of two quasiconvex subgroups and give examples demonstrating that height, width and weak width are different invariants of a subgroup. 
\end{abstract}
\subjclass[2010]{Primary: 68W01; Secondary: 20E45, 20F67}
\maketitle

\section{Introduction}

A subgroup $H$ of $G$ is malnormal in $G$ if for any $g \in G$ such that $g \notin H$ the intersection $H \cap g^{-1} H g$ is trivial. $H$ is almost malnormal in $G$ if for any $g \in G$ such that $g \notin H$ the intersection $H \cap g^{-1} H g$ is finite.
Most subgroups are neither normal nor malnormal, so the study of the intersection pattern of conjugates of a subgroup is an interesting old problem. It is closely connected to the study of the behavior of different lifts of subspaces of topological spaces in  covering spaces. 
Malnormality of a subgroup has been generalized in different ways. One of them, namely the height, introduced  in \cite{G-R}, has been used by Agol in \cite{Ag}  and \cite{A-G-M} in his proof of  Thurston's conjecture that $3$-manifolds are virtual bundles.

At the end of this section we introduce yet another generalization of malnormality. It is a new invariant of the conjugacy class of a subgroup $H$ of $G$, which we call the weak width of a subgroup. Like malnormality, the weak width measures only the cardinality of the intersections of $H$ with its conjugates in $G$.

In section $2$ we review the definitions and the basic properties of the width and the height of a subgroup. 

In section $3$ we give an algorithm for constructing a generating set consisting of short elements for the intersection of a quasiconvex subgroup of a negatively curved group with a conjugate. Using that algorithm we show how to determine if the aforementioned intersection is infinite. The algorithm utilizes the geodesic core of a subgroup introduced by the author in \cite{Gi1} and \cite{Gi2}. 

In section $4$ we show that quasiconvex subgroups of negatively curved groups have finite weak width and introduce an algorithm for computing   the weak width of a quasiconvex subgroup of a negatively curved group in finite time, which might simplify Agol's proof. This algorithm also decides  if a quasiconvex subgroup $H$ of a negatively curved group $G$ is almost malnormal.  

Bridson and Wise showed in \cite{B-W} that the malnormality of a subgroup in a negatively curved group is undecidable. At the end of section $4$ we give an indication that malnormality might be undecidable even for a quasiconvex subgtoup of a negatively curved group.

In section $5$ we introduce algorithms for computing the height and the width of a quasiconvex subgroup of a negatively curved group in finite time. We also explicitely compute a quasiconvexity constant of the intersection of two quasiconvex subgroups.

Kharlampovich, Miasnikov and Weil constructed
a different algorithm for computing the height of a quasiconvex subgroup of a negatively curved group and deciding its almost malnormaity in a recent preprint \cite{K-M-W}. Their algorithm also utilizes the geodesic core of a subgroup which they reintroduce under the name "Stallings graph".

 In section $6$ we give examples showing that height, width, and weak width are different invariants of a subgroup.

\begin{remark} \label{R:ShortRep}
 Note that if $g_i \in Hg_jH$, hence $g_i=h_1g_jh_2$ with $h_1$ and $h_2$ in 
$H$, then  $H \cap g_i^{-1}Hg_i = H \cap (h_1g_jh_2)^{-1}H(h_1g_jh_2)= H \cap (h_2^{-1}g_j^{-1}Hg_jh_2)=
h_2^{-1}(H \cap g_j^{-1}Hg_j)h_2$. So the cardinality of the set $H \cap g_i^{-1}Hg_i$ is equal to the cardinality of the set $H \cap g_j^{-1}Hg_j$. 
\end{remark}

Let $H$ be a subgroup of a group $G$. We say that the elements $\{g_i| 1 \leq i \leq n \}$ of $G$ belong to different double cosets of $H$ if $Hg_iH \neq Hg_jH$ for $i \neq j$.

Remark \ref{R:ShortRep} motivates the following definition.

\begin{definition}
We say that the weak width of an infinite subgroup $H$ of $G$ in $G$, denoted $WeakWidth(H,G)$, is $n$ if there exists a collection of $n$ elements  $\{ g_1 =1_G, g_2, \cdots g_n \}$ of $G$ belonging to different double cosets of $H$ such that the intersection $H \cap g_i^{-1} H g_i$ is infinite for all $1 \leq i \leq n$  and $n$ is maximal possible. We define the weak width of a finite subgroup of $G$ to be $0$.
\end{definition}

Note that if $H$ is infinite then $WeakWidth(H,G)=1$ if and only if $H$ is almost malnormal in $G$. 

If $WeakWidth(H,G)=n$, then in any set of $n+1$  conjugates of $H$ by elements in different double cosets of $H$,  $ \{H, g_2^{-1} H g_2,\cdots, g_{n+1}^{-1} H g_{n+1} \}$ there exists an element
$g_i^{-1} H g_i$ which has finite intersection with $H$.

\bigskip

\textbf{Acknowledgment}

The author would like to thank Shmuel Weinberger for his support.

\section{Height and Width.}

\begin{definition}
Let $H$ be a subgroup of a group $G$. We say that the elements $\{g_i| 1 \leq i \leq n \}$ of $G$ belong to different cosets of $H$ if $Hg_i \neq Hg_j$ for $i \neq j$. 
\end{definition}

If $g_i$ and $g_j$ belong to the same coset of $H$, then $g_jg_i^{-1} \in H$, so $g_jg_i^{-1} H g_ig_j^{-1}
=H$, hence $g_i^{-1} H g_i = g_j^{-1} H g_j$. Therefore it is interesting to investigate the intersections of a family of conjugates of $H$ only if the conjugating elements belong to different cosets of $H$. However, the following example shows that the conjugates of a subgroup $H$ by elements in different cosets of $H$ and different double cosets of $H$ need not be distinct. 

\begin{example}
 Let $G=<a_1, a_2|a_1a_2=a_2a_1>$ be a free abelian group of rank $2$ and let $H=<a_1>$ be a subgroup of $G$. As $Ha_2H =H a_2 \neq H$, the elements $1_G$ and $a_2$ belong to different cosets and to different double cosets of $H$, but   $a_2^{-1} H a_2 =H$. 
\end{example}

The following definitions were introduced in \cite{G-R} and \cite{G-M-R-S}.

\begin{definition}
We say that the height of an infinite subgroup $H$ of $G$ in $G$, denoted by $Height(H,G)$, is $n$  if there exists a collection of $n$ conjugates of $H$ in $G$ by elements in different cosets of $H$ such that the intersection of all the elements of the collection is infinite and $n$ is maximal possible.  We define the height of a finite subgroup of $G$ to be $0$.
\end{definition}

Note that if $Height(H,G) =n$ then the intersection of any set of $n+1$  conjugates of $H$ by elements in different cosets of $H$ is finite.
It was shown in \cite{G-M-R-S} that quasiconvex subgroups of negatively curved groups have finite height in the ambient group.

\begin{definition}
We say that the width of an infinite subgroup $H$ of $G$ in $G$, denoted by $Width(H,G)$, is $n$ if there exists a collection of $n$ conjugates of $H$ by elements in different cosets of $H$ such that the intersection of any two elements of the collection is infinite and $n$ is maximal possible.  We define the width of a finite subgroup of $G$ to be $0$.
\end{definition}

Note that if $Width(H,G)=n$ then in any set of $n+1$  conjugates of $H$ by elements in different cosets of  $H$ there exist two elements with finite intersection. It was shown in \cite{G-M-R-S} and, later, by Hruska and Wise in \cite{H-W} that quasiconvex subgroups of negatively curved groups have finite width in the ambient group.

It follows from the above definitions that $Width(H,G)$, $WeakWidth(H,G)$ and $Height(H,G)$ are invariants of the conjugacy class of $H$ in $G$ and that $Height(H, G) \leq Width(H, G)$. However, there is no obvious relationship between  
$WeakWidth(H, G)$ and $Width(H, G)$. We will address this question in section $5$.

Infinite normal subgroups of infinite index have infinite height, width, and weak width in the ambient group. More generally, if an infinite subgroup has infinite index in its  normalizer, then the
 subgroup has infinite height, width, and weak width in the ambient group.
 
 If  $H$ is infinite, then $H$ is almost malnormal in $G$ if and only if $Height(H,G)= Width(H,G)=WeakWidth(H,G)=1$. Also, almost malnormal subgroups of a torsion-free group are malnormal.

\section{An algorithm deciding if the intersection of a quasiconvex subgroup of a negatively curved group with a conjugate is finite}

\begin{remark}\label{R:Algorithm For Pairs}Let $H$ be a subgroup of $G$, and let $h \in H$ and $g \notin H$. We want to decide if the intersection $H \cap g^{-1} H g$ is finite, however, in general,  finiteness of a group is undecidable. Hence we restrict ourselves to the special case of $H$ being quasiconvex and $G$ being negatively curved. In this case the conjugate $g^{-1} H g$ is also quasiconvex, hence the intersection $H \cap g^{-1} H g$ is quasiconvex in $G$, therefore the group $H \cap g^{-1} H g$ is negatively curved.

Noel Brady in \cite{Br} and Bogopolskii and Gerasimov in \cite{B-G} showed that the orders of finite subgroups of a $\delta$-negatively curved group $G$ generated by a finite set $X$ is bounded by a constant $C = C(\delta, |X|)$.  Dahmani and Guirardel showed in \cite{D-G} that the isomorphism problem for negatively curved groups is decidable, so  we can determine whether  $H \cap g^{-1} H g$ is finite by checking if it is isomorphic to any finite group of order less than $C$. 
\end{remark}

\begin{remark}\label{R:Delta}
Given a finite presentation $<X|R>$ for a group $G$, it is undecidable if $G$ is negatively curved. However, if $G$ is known to be negatively curved, the negative curvature constant $\delta$ can be determined, (cf. \cite{E-H} and \cite{Pa}), so in this paper we can assume that $\delta$ is given.
\end{remark}

We  formalize Remark \ref{R:Algorithm For Pairs} as follows.

\textbf{An algorithm deciding if the intersection of a quasiconvex subgroup of a negatively curved group with a conjugate is finite}

Input: a finite presentation $<X|R>$ for a group $G$ and a constant $\delta$ (not necessarily minimal) of negative curvature of $G$, a finite generating set for a subgroup $H \subset  G$ and a quasiconvexity constant $K$ (not necessarily minimal) of $H$, and an element $g \in G$ which is not in $H$.

Output: a finite group isomorphic to $H \cap g^{-1} H g$ or a statement that the intersection is infinite.

\begin{enumerate}
\item Using Theorem \ref{T:GenSerFotINtersections} (on next page) exhibit explicitly a finite generating set for $H \cap g^{-1} H g$.
\item Find a bound $C$ for the order of finite subgroups of $G$.
\item Make a list $L'$ of all finite groups with fewer than $C$ elements.
\item Check if $H \cap g^{-1} H g$ is isomorphic to an element of $L'$. If positive, output the intersection. If negative, output the statement that the intersection is infinite.
\end{enumerate}

The rest of the section provides the proof of Theorem \ref{T:GenSerFotINtersections}.

We will use the following notation.

Let $X$  be a set and let $X^* = \{x,x^{-1} |x \in X \}$, where for $x \in X$ we define $(x^{-1})^{-1} =x$.
Let $G$ be a group generated by the set $X^*$.
As usual, we identify the word in $X^*$ with the corresponding element in $G$. We denote the equality of two words in $X^*$ by $" \equiv "$.

 Let $Cayley(G)$ be the Cayley graph of $G$ with respect to the generating set $X^*$. The set of vertices of $Cayley(G)$ is $G$,  the set of edges of $Cayley(G)$ is $G \times X^*$, and the edge $(g,x)$ joins the vertex $g$ to $gx$. Note that $Cayley(G)$ can be effectively constructed if and only if the word problem in $G$ is solvable.
 
\begin{definition} 
The label of the path  
$p=(g,x_1)(gx_1,x_2)  \cdots (gx_1x_2 \cdots x_{n-1},x_n)$ 
in $Cayley(G)$ is the word $Lab (p) \equiv x_1 \cdots x_n $.
The length of the path $p$, denoted by $|p|$, is the number of edges forming it. The inverse of a path $p$ is denoted by $\bar p$, 
\end{definition}

Let $H$ be a subgroup of $G$. The coset graph of $H$ in $G$, denoted by $Cayley(G,H)$, provides a good geometric insight into the structure of intersections of conjugates of $H$ in $G$. Recall that the set of vertices of $Cayley(G,H)$ is the set of the  cosets  of $H$ in $G$, namely $\{ Hg \}$. 
The set of edges of $Cayley(G,H)$ is
$ \{ Hg \}  \times X^*$. An edge $ (H g, x)$ begins at the vertex  $Hg$ and ends at the vertex 
$Hgx $. 
Note that $H$ acts on the Cayley graph of $G$ by left 
multiplication, and $Cayley(G,H)$ can be defined as the quotient of $Cayley(G)$ by this action. $Cayley(G,H)$ can be effectively constructed if and only if the generalized word problem for $H$ in $G$ is solvable.

\begin{definition} 
The label of a path $p = (Hg_1,x_1) (Hg_1x_1, x_2) \cdots (Hg_1x_1 \cdots x_{n-1},x_n)$ in $Cayley(G,H)$ is the word $Lab (p) \equiv x_1 x_2 \dots x_n $. 
The length of the path $p$, denoted by $|p|$, is the number of edges forming it. The inverse of a path $p$ is denoted by $\bar p$. 
\end{definition}

As usual, we identify the word $Lab(p)$ with the corresponding element in $G$.

\begin{definition} 
Let $\pi_{H} :Cayley(G) \rightarrow Cayley(G,H)$ be
the projection map: $\pi_{H} (g)=Hg$ and
$\pi_{H} (g,x)=(Hg,x)$.
A geodesic in $Cayley(G,H)$ is the image of a 
geodesic in 
$Cayley(G)$ under the projection $\pi_{H}$.
\end{definition} 

Hence, if $H$ is non-trivial, $Cayley(G,H)$ contains closed geodesics (i.e. geodesics with initial and terminal vertices coinciding) of positive length.

By definition of the coset graph,  a path $p$ in $Cayley(G, H)$ which begins at
$H \cdot 1$  ends at $ H \cdot Lab (p)$,
so a path $p$ beginning at $H \cdot 1$ is a loop (i.e. the initial and the terminal vertices of $p$ coincide), if and only if  $Lab(p) \in H $.

Moreover, a path $p$ in $Cayley(G,H)$ which begins at the vertex $Hg$ ends at the vertex $H \cdot gLab(p)$, hence such $p$ is a loop   if and only if $HgLab(p)=Hg$, which happens if and only if $ g Lab(p) g^{-1} \in H$.

Let $h \in H$ and $g \in G$. In order to check if $g^{-1}h g$ belongs to  $H $, consider a path $p$ in $Cayley(G,H)$ which begins at $H \cdot 1$ and has the following decomposition: $p=q \gamma t$ with $Lab(q)= g^{-1}, Lab(\gamma) = h$ and $Lab(t) \equiv Lab(q) ^{-1}= g$. Note that $g^{-1}h g \in H $ if and only if
$p$ is a loop.  As $Lab(t) \equiv Lab(q) ^{-1}$, $p$ is a loop if and only if $t$ coincides with $\bar q$, so $t = \bar q$. This happens if and only if  $\gamma$ is a loop.  As $q$ begins at $H \cdot 1$, it ends at $H \cdot Lab(q)=H g^{-1}$, so $\gamma$ begins at $H g^{-1}$. Hence $g^{-1}h g$ belongs to  $H$ if and only if $Cayley(G,H)$ contains  a loop beginning at $Hg^{-1}$ labeled with $h$. 

We assume for the rest of the paper that the subgroup $H$ is given by specifying a generating set $H_0$, which will be assumed to be finite if $H$ is quasiconvex.  Note that the set $g^{-1}H_0g$ is a finite generating set for the conjugate  $g^{-1}Hg$. 
 
Denote the $K$-neighborhood of a set $S$ by $N_K(S)$.
 
 \begin{theorem}\label{T:GenSerFotINtersections}
 Let $G$ be a $\delta$-negatively curved group generated by a finite set $X^*$. Let $H$ be a $K$-quasiconvex subgroup of $G$, and let $g$ be an element of $G$ which does not belong to $H$.
Let $M$ be the number of vertices in  $N_{2\delta + K + |g|}(H \cdot 1) \subset Cayley(G,H)$. 
There exists a finite, effectively described, generating set for
the subgroup $H \cap g^{-1}Hg$ consisting of elements shorter than $2 |g| + 2M^2 +1$.
\end{theorem}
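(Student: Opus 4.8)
The plan is to realize $H \cap g^{-1}Hg$ as the label set of loops in a finite ``fibre product'' subgraph of $Cayley(G,H)$ and then read off generators from a spanning tree. First I would reduce to the conjugate problem via the identity $H \cap g^{-1}Hg = g^{-1}(H \cap gHg^{-1})g$, which lets me produce generators for $H \cap gHg^{-1}$ and conjugate them by $g$ at the end; this is exactly what contributes the summand $2|g|$ to the final length bound. For the intersection $H \cap gHg^{-1}$ I would use the loop criteria established just before the statement: an element $h$ lies in $H$ iff the word $h$ traces a loop in $Cayley(G,H)$ at $H\cdot 1$, and $h$ lies in $gHg^{-1}$ (i.e. $g^{-1}hg \in H$) iff $h$ traces a loop at the vertex $Hg^{-1}$. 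Thus $h \in H \cap gHg^{-1}$ iff the single word $h$ simultaneously traces loops based at $H\cdot 1$ and at $Hg^{-1}$. I would encode this by the graph $P$ whose vertices are pairs of cosets and whose edges send $((Ha,Hb),x)$ to $(Hax,Hbx)$; the labels of loops of $P$ based at $v_0 = (H\cdot 1, Hg^{-1})$ then form a subgroup (closed under concatenation and reversal) contained in $H \cap gHg^{-1}$, and it will equal $H \cap gHg^{-1}$ once I show every element is captured by a loop lying in a fixed finite part of $P$.

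The core geometric step, and the main obstacle, is confining both coordinate-paths to the neighbourhood $N_{2\delta+K+|g|}(H\cdot 1)$. Given $h \in H\cap gHg^{-1}$, choose a geodesic word $w$ representing $h$ in $Cayley(G)$, trace it from $v_0$, and let $w_i$ be its prefixes. Since $h \in H$, the endpoints $1,h$ of the geodesic $[1,h]$ lie in $H$, so $K$-quasiconvexity gives $d(w_i,H)\le K$, placing the first coordinate $Hw_i$ in $N_K(H\cdot 1)$. For the second coordinate I would use that $h \in gHg^{-1}$ forces $g^{-1}$ and $g^{-1}h$ to lie within $|g|$ of $H$ (indeed $g^{-1}h \in Hg^{-1}$), and then compare the translated geodesic $g^{-1}[1,h]$, whose vertices are the $g^{-1}w_i$, with the concatenation of $[g^{-1},1]$, a geodesic $[1,h']$ with $h'\in H$, and $[h',g^{-1}h]$. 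A $\delta$-thin quadrilateral estimate (two thin triangles) then yields $d(g^{-1}w_i,H)\le 2\delta + K + |g|$, so the second coordinate $Hg^{-1}w_i$ lies in $N_{2\delta+K+|g|}(H\cdot 1)$, using that $d(H\cdot 1, Hg^{-1}w_i)=d(H,g^{-1}w_i)$. This is precisely where the constant $2\delta+K+|g|$, and hence $M$, enters. Consequently the whole fibre-product loop of $h$ stays in the subgraph $P_0$ of $P$ spanned by pairs of vertices of $N_{2\delta+K+|g|}(H\cdot 1)$, which has at most $M^2$ vertices.

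It then remains to extract a finite generating set with the stated length bound. Because every element of $H \cap gHg^{-1}$ is the label of a loop of $P_0$ at $v_0$, and labels of such loops form a subgroup contained in $H\cap gHg^{-1}$, this label subgroup is exactly $H\cap gHg^{-1}$. I would choose a spanning tree $T$ of the connected component of $v_0$ in $P_0$ and take, for each edge $e$ outside $T$, the loop running along $T$ to $e$, across $e$, and back along $T$; by the standard description of $\pi_1$ of a graph these finitely many loops generate, so their labels generate $H\cap gHg^{-1}$. As $T$ has at most $M^2$ vertices, each such loop has length at most $2(M^2-1)+1 < 2M^2+1$, giving generators of $H\cap gHg^{-1}$ shorter than $2M^2+1$; conjugating by $g$ produces generators of $H\cap g^{-1}Hg$ shorter than $2|g|+2M^2+1$. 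Finally, the construction is effective: quasiconvexity of $H$ makes the membership problem for $H$ in $G$ solvable, so the finite ball $N_{2\delta+K+|g|}(H\cdot 1)$, the graph $P_0$, the spanning tree, and the generators can all be built explicitly. The delicate point throughout is the quadrilateral estimate of the preceding paragraph, since it is what guarantees that a single finite graph simultaneously sees both basepoints and that no element of the intersection escapes it.
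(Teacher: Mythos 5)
Your argument is correct, and its geometric heart coincides with the paper's: the thin-quadrilateral estimate you use to confine the second coordinate to $N_{2\delta+K+|g|}(H\cdot 1)$ is exactly the opening computation of Lemma \ref{L:ConstrGenForIntes} (with the same constant), and the vertex set of your fibre product is precisely the set of pairs $\{(v_i,w_i)\}$ that the paper tracks along the two loops $t$ and $\pi_H(r)$. Where you genuinely diverge is in how generators are extracted from this finite data. The paper runs a pigeonhole argument on repeated pairs to split any $h$ with $g^{-1}hg\in H$ as $h=h_1h_2$ with $h_1$ short and $h_2$ strictly shorter than $h$, iterates, concludes that the elements of $H\cap g^{-1}Hg$ of length less than $2|g|+2M^2+1$ generate, and then \emph{describes} the generating set only implicitly, by enumerating all words of that length and filtering them through two membership tests. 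You instead build the finite fibre-product graph outright and read off a free generating set of the fundamental group of the component of $v_0$ from a spanning tree. The two extractions buy slightly different things: the spanning-tree route yields a directly listed and typically much smaller generating set (one generator per non-tree edge, of length at most $2M^2-1$, marginally sharper than the stated bound) and dispenses with the final filtering step, while the paper's splitting argument never requires the coset-graph ball to be assembled as an explicit labelled graph --- though both proofs ultimately lean on the solvability of the membership problem for $H$ to be effective, so this is a difference of presentation rather than of logical strength. Either way the passage from $H\cap gHg^{-1}$ to $H\cap g^{-1}Hg$ costs the same $2|g|$.
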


\begin{proof}
Note that as the set $X$ has finitely many, say $l$, elements, it follows that $M \leq 2l \cdot (2l-1)^{2\delta + K + |g| -1}$ where equality might  hold when $G$ is free.

Let $h$ be a non-trivial element of $H$ such that $ g^{-1}hg \in H$, hence $g^{-1}hg \in H  \cap g^{-1}Hg$.
Applying Lemma \ref{L:ConstrGenForIntes} (which follows on next page) finitely many times, we can find a decomposition $h=h_1 \cdots h_m$ such that $h_i \in H, g^{-1}h_ig \in H$ and $|h_i| <  2M^2 +1$ for all $1 \le i \le m$. Then $g^{-1}hg =(g^{-1}h_1g) \cdots (g^{-1}h_mg)$. Note that $|g^{-1}h_ig| < 2 |g| + 2M^2 +1$, hence the subgroup $H \cap g^{-1}Hg$ is generated by the set $S$ of elements shorter than $2|g| + 2M^2 +1$. In order to describe $S$ proceed as follows: start by making a list $S_0$ of all elements in $G$ shorter than $2|g| + 2M^2 +1$. This can be accomplished in finite time because negatively curved groups have solvable word problem. A practical way to do so is to implement the Dehn algorithm (cf., for example \cite{B-H} p.442) on the set of all reduced words in $X^*$ of length not greater than $2|g| + 2M^2 +1$. As $H$ and  $ g^{-1} H g$ are quasiconvex in $G$, they have a solvable membership problem, (which we will address at the end of this section).                                       Hence we can determine the intersection $S_0 \cap( H \cap g^{-1} H g)$ by first determining the intersection  $S_1= (S_0 \cap H)$ and then determining the intersection $S_1 \cap g^{-1} H g$ which is the desired generating set $S$ of $ H \cap g^{-1} Hg$.

\end{proof}

We will need the following characterization of quasiconvexity, observed in \cite{Gi2}.

\begin{remark}\label{R:GeodesicsInCosetGraph}
 A subgroup $H$  of a group $G$ is  $K$-quasiconvex in $G$ if and only if any geodesic in  $Cayley(G,H)$ which begins at the basepoint $H \cdot 1$ and is labeled by an element of $H$ belongs to  $N_K(H \cdot 1)$. 
\end{remark}

\begin{lemma}\label{L:ConstrGenForIntes}
Let $H, g$ and $M$ be as in the satement of Theorem \ref{T:GenSerFotINtersections}, and
let $h$ be a non-trivial element of $H$ such that $ g^{-1}hg \in H$. If $|h| > M^2$ then $h=h_1h_2$ with $h_1 \in H, h_2 \in H, g^{-1}h_1g \in H, g^{-1}h_2g \in H,  |h_1| <  2M^2 +1$, and $|h_2| < |h|$.   
\end{lemma}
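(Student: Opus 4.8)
The plan is to encode the two membership conditions $h\in H$ and $g^{-1}hg\in H$ as two closed geodesics in the coset graph, track a \emph{pair} of vertices simultaneously along both, and then extract the factor $h_1$ by a pigeonhole argument on pairs of vertices.

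Concretely, fix a geodesic word $h\equiv x_1\cdots x_n$ with $n=|h|$, write $p_k\equiv x_1\cdots x_k$ for its prefixes, and set $h'=g^{-1}hg\in H$. Reading $h$ from the basepoint $H\cdot 1$ traces the vertices $v_k=Hp_k$, and reading $h$ from $Hg^{-1}$ traces $w_k=Hg^{-1}p_k$. Since $h\in H$ we get $v_n=v_0=H\cdot 1$, and since $g^{-1}hg\in H$ we get $w_n=w_0=Hg^{-1}$; so both are closed paths, and each is the image under $\pi_H$ of a geodesic of $Cayley(G)$, namely of $\tilde\gamma=[1,h]$ and of its left translate $\tilde\delta=g^{-1}\tilde\gamma=[g^{-1},g^{-1}h]$. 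First I would record that a repeated pair suffices: if $(v_i,w_i)=(v_j,w_j)$ with $i<j$, then writing $h=PuS$ with $P\equiv p_i$, $u\equiv x_{i+1}\cdots x_j$, $S\equiv x_{j+1}\cdots x_n$, the equalities $HPu=HP$ and $Hg^{-1}Pu=Hg^{-1}P$ yield $PuP^{-1}\in H$ and $g^{-1}(PuP^{-1})g\in H$. Hence $h_1:=PuP^{-1}$ lies in $H\cap g^{-1}Hg$, and so does $h_2:=h_1^{-1}h=PS$; moreover $h=h_1h_2$, with $|h_2|\le n-(j-i)<|h|$ and $|h_1|\le 2i+(j-i)=i+j$.

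The heart of the argument, and the step I expect to be the main obstacle, is to confine \emph{both} vertex sequences to the finite set $N_{2\delta+K+|g|}(H\cdot1)$, which has $M$ vertices, so that at most $M^2$ pairs occur. For the $v_k$ this is immediate from Remark \ref{R:GeodesicsInCosetGraph}: the $v$-path is a geodesic of $Cayley(G,H)$ starting at $H\cdot 1$ and labeled by $h\in H$, hence stays in $N_K(H\cdot1)$. The $w_k$ require hyperbolicity. Here I would use $g^{-1}h=h'g^{-1}$ to note that $\tilde\delta$ joins the two points $g^{-1}=1\cdot g^{-1}$ and $h'g^{-1}$ of the right coset $Hg^{-1}$, and that every point $ag^{-1}$ of $Hg^{-1}$ lies within $|g|$ of $a\in H$, i.e. $Hg^{-1}\subseteq N_{|g|}(H)$. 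Applying $2\delta$-thinness to the geodesic quadrilateral $1,h',h'g^{-1},g^{-1}$ — whose side $[1,h']$ lies in $N_K(H)$ by quasiconvexity of $H$, and whose sides $[h',h'g^{-1}]$ and $[g^{-1},1]$ have length $|g|$ with an endpoint in $H$ — shows that every point of $\tilde\delta$, in particular $g^{-1}p_k$, lies within $2\delta+K+|g|$ of $H$ in $Cayley(G)$. Translating back, $d(w_k,H\cdot1)=d_{Cayley(G)}(H,g^{-1}p_k)\le 2\delta+K+|g|$, so $w_k\in N_{2\delta+K+|g|}(H\cdot1)$, as needed.

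Finally I would finish by pigeonhole. Since $|h|=n>M^2$, the $M^2+1$ pairs $(v_k,w_k)$ for $0\le k\le M^2$ cannot be distinct, so there are $0\le i<j\le M^2$ with $(v_i,w_i)=(v_j,w_j)$. With the decomposition above, $|h_1|\le i+j\le (M^2-1)+M^2<2M^2+1$ and $|h_2|<|h|$, while $h_1,h_2\in H\cap g^{-1}Hg$ and $h=h_1h_2$, which is exactly the assertion. The only routine points are that the geodesic word lets me read $p_k$ up to $k=M^2$ (guaranteed by $n>M^2$) and that element lengths are bounded by the word lengths of $PuP^{-1}$ and $PS$; the genuine content is the quadrilateral estimate controlling the $w$-path.
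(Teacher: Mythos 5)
Your proof is correct and follows essentially the same route as the paper: project the geodesic quadrilateral with sides labeled $g^{-1}hg$, $g^{-1}$, $h$, $g$ into $Cayley(G,H)$, use quasiconvexity plus $2\delta$-thinness to confine both $h$-labeled paths (the one from $H\cdot 1$ and the one from $Hg^{-1}$) to the $M$-vertex neighborhood $N_{2\delta+K+|g|}(H\cdot 1)$, pigeonhole on the pairs $(v_k,w_k)$, and split $h=(PuP^{-1})(PS)$. The only difference is cosmetic — you bound the distance from the translated geodesic to $H$ directly in $Cayley(G)$ before projecting, while the paper bounds the distance from the side $r$ to the side $p$ and then projects — and both yield the same constants.
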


\begin{proof}
There exists $h_0 \in H$ such that $h_0=g^{-1}hg$. Consider a closed geodesic $4$-gon $pqrs$ in $Cayley(G)$ with $p$ beginning at the basepoint $1_G$ such that $Lab(p)=h_0, Lab(q)=g^{-1}, Lab(r)=h$ and $Lab(s) \equiv Lab(q) ^{-1} =g$. As $r \subset N_{2\delta}(spq)$ and $|s|=|q|=|g|$, it follows that $r \subset N_{2\delta + |g|}(p)$. 

Consider the projection $\pi_H(pqrs)$ in $Cayley(G,H)$. As $Lab(p) \in H$ and $p$ begins at $1_G$, $\pi_H(p)$ is a  loop beginning at $H \cdot 1$. Remark \ref{R:GeodesicsInCosetGraph} states that $\pi_H(p) \subset N_K(H \cdot 1)$. Then, as the projection map does not increase distances, $\pi_H(r) \subset N_{2\delta + |g|}(\pi_H(p)) \subset N_{2\delta + |g|+ K}(H \cdot 1)$. As $Lab(q) \equiv Lab(s)^{-1}$, $\pi_H(q)$ begins at $H \cdot 1$ and $\pi_H(s)$ ends at $H \cdot 1$, it follows that $\pi_H(s) = \overline{\pi_H(q)}$ and $\pi_H(r)$ is a loop beginning at $H g^{-1}$.

Let $t$ be a closed geodesic in $Cayley(G,H)$ beginning at $H \cdot 1$ with $Lab(t) \equiv Lab(r)=h$. Let $|t|=|\pi_H(r)|=n$. Let $v_1=H \cdot 1, v_2, \cdots, v_n=H \cdot 1$ be the vertices of $t$ in order along $t$, and let $w_1=Hg^{-1}, w_2, \cdots, w_n=Hg^{-1}$ be the vertices of $\pi_H(r)$ in order along $\pi_H(r)$. Consider the set of pairs of vertices $ \{ (v_i,w_i)| 1 \le i \le n \}$. If $n > M^2$ then there exist a pair of  indexes $1 \le i < j \le M^2 +1$ such that $(v_i, w_i) = (v_j, w_j)$. This means that the loops $t$ and  $\pi_H(r)$ have self-intersections "in the same place". To be precise it means that there exists a decomposition $t=t_1 t_2 t_3$ with $t_1$ ending at $v_i$, $t_2$ beginning at $v_i$ and ending at $v_j$ and $t_3$ beginning at $v_j$ with $t_2$ a loop. There also exists a decomposition $\pi_H(r)= \pi_H(r)_1 \pi_H(r)_2 \pi_H(r) _3$ with $\pi_H(r)_1$ ending at $w_i$,  $\pi_H(r)_2$ beginning at $w_i$ and ending at $w_j$ and $\pi_H(r)_3$ beginning at $w_j$ with $\pi_H(r)_2$ being a loop. By construction $Lab(t_2) \equiv Lab(\pi_H(r)_2)$. 

Let $h_1=Lab(t_1t_2 \bar t_1)$ and $h_2=Lab(t_1t_3)$, then $h=Lab(t)=Lab(t_1t_2\bar t_1)Lab(t_1t_3)=h_1h_2$.
We will show that $h_1$ and $h_2$ fulfill the requirements of the lemma.

 As $t_2$ is a loop, it follows that the paths $t_1 t_2 \bar t_1$ and  $t_1t_3$ are loops beginning at $H \cdot 1$, hence $h_1 \in H$ and $h_2 \in H$. 
 
 As  $\pi_H(r)_2$ is a loop, the paths $\pi_H(r)_1 \pi_H(r)_2 \overline{\pi_H(r)_1}$ and $\pi_H(r)_1 \pi_H(r_3)$  are loops beginning at $Hg^{-1}$, therefore the paths $\pi_H(q) \pi_H(r)_1 \pi_H(r)_2 \overline{ \pi_H(r)_1} \pi_H(s)$ and $\pi_H(q) \pi_H(r)_1 \pi_H(r)_3 \pi_H (s)$ are loops beginning at $H \cdot 1$. It follows that the labels of these loops are elements of $H$. However, by construction
 $Lab( \pi_H(r)_1 \pi_H(r)_2 \overline{\pi_H(r_1)})  \equiv Lab(t_1 t_2 \bar t_1)=h_1$ and $Lab(\pi_H(r)_1 \pi_H(r)_3) \equiv Lab(t_1)Lab(t_3)=h_2$. As $Lab(\pi_H(q))=g^{-1}$ and $Lab(\pi_H(s))=g$, it follows that $g^{-1}h_1g \in H$ and  $g^{-1}h_2g \in H$. As $t_1t_3$ is a proper subpath of $t$, it follows that $|h_2| \le |t_1t_3| < |t| = |h|$. By construction
 $|t_1t_2| \le M^2 +1$, hence $|h_1|=|t_1t_2 \bar t_1| < 2 M^2 +1$, proving the lemma.
 
\end{proof}

We end this section by a brief discussion of the solvability of  membership problem for quasiconvex subgroups of negatively curved groups which was shown, for example, by the author in \cite{Gi2}, by Ilya Kapovich in \cite{Ka}, by Farb in \cite{Fa} and by Kharlampovich, Miasnikov and Weil in \cite{K-M-W}. This problem is undecidable in general case.

The author's solution in \cite{Gi2} uses the following definition.

\begin{definition} 
The geodesic core of $Cayley(G,H)$ is the union of all closed geodesics
in $Cayley(G,H)$ beginning at the vertex $H \cdot 1$.
We denote it $Core(G,H)$.
\end{definition}

 The geodesic core was recently reintroduced in \cite{K-M-W}, where it was called "a Stallings graph". 
\\
\\
By definition $Core(G,H)$ is an automaton accepting the language of geodesic representatives of $H$ in $Cayley(G)$ with the initial and final state being the base point $H \cdot 1$ (cf., for example, \cite{Ep}, p. 7). 

Remark \ref{R:GeodesicsInCosetGraph} implies the following observation, made in \cite{Gi2}.
 
\begin{remark}\label{R:GeodCoreFinite}
 A subgroup $H$  of a group $G$ is  $K$-quasiconvex 
if and only if  $Core(G,H)$ belongs to 
 $N_K(H \cdot 1) \subset Cayley(G,H)$. 
\end{remark}

It follows that if $G$ is finitely generated and $H$ is quasiconvex in $G$, then $Core(G,H)$ is finite, hence it is a finite state  automaton for the  language of geodesics in $H$. If, in addition, $G$ has a solvable word problem,  the membership problem is solvable for any $K$-quasiconvex subgroup of $H$. Indeed, given a word  $w$ in $X^*$ use the solution to the word problem in $G$ to find a geodesic representative $w_0$ of $w$ in $G$. Then check if the finite state automaton $Core(G,H)$ accepts $w_0$.

\section{An algorithm for computing the weak width of a quasiconvex subgroup of a negatively curved group and deciding its almost malnormality}

As $H$ acts on $Cayley(G,H)$ by right multiplication, the orbits of this action form a partition of the coset graph. These $H$-orbits are exactly the double cosets $HgH$, because $g_0 \in HgH$ if and only if $g_0=h_1gh_2$ for some $h_1$ and $h_2$ in $H$, which happens if an only if the vertices $Hg$ and $Hg_0$ in $Cayley(G,H)$ are connected by a path $p$ with $Lab(p) \in H$.
Remark \ref{R:ShortRep} implies that the cardinality of the intersection of $H$ with its conjugate $g_1^{-1}Hg_1$ is constant on the $H$-orbit of $g_1^{-1}$, so
in order to determine the weak width of a subgroup $H$ of $G$, we need to decide the finiteness of the intersections $H \cap g^{-1} H g$ for all $g$ in distinct $H$-orbits in $Cayley(G,H)$. Even in the case when $H$ is  quasiconvex and $G$ is negatively curved this problem  seems to be difficult, because it was shown by Arzhantseva in \cite{Ar} that an infinite index quasiconvex $H$ in a negatively curved $G$ has infinitely many distinct double cosets $HgH$.  However, Lemma 1.2 from \cite{G-M-R-S} states that if $H$ is $K$-qasiconvex and $G$ is $\delta$-negatively curved then the intersection  $H \cap g^{-1} H g$ might be infinite only if the $H$-orbit of $g$ contains a representative shorter than $2K + 2 \delta$. This lemma also appeared in \cite{Ar}. Here is the exact statement.

\begin{lemma}\label{L:SmallIntersection}
Let $H$ be a $K$-quasiconvex  subgroup of a
$\delta$-negatively curved group $G$ and let $g$ be an element in $G$. If every element of  
the double coset $HgH$ is longer than $2K + 2 \delta$, then the intersection
$H \cap g^{-1}Hg$ consists of elements shorter than $2K+8 \delta+2$, hence it is finite.
\end{lemma}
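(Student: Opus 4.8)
The plan is to show that any $h \in H \cap g^{-1}Hg$ which is too long forces the double coset $HgH$ to contain a short element, contradicting the hypothesis; since the resulting bound applies to \emph{every} element of the intersection, finiteness follows because a finitely generated group contains only finitely many elements of bounded length. Write $h_0 = ghg^{-1}$; from $h \in H$ and $h \in g^{-1}Hg$ we get $h \in H$ and $h_0 \in H$ simultaneously. The two geometric facts I would exploit are: the geodesic $[1,h]$ in $Cayley(G)$ joins two points of $H$, so by $K$-quasiconvexity it lies in $N_K(H)$; and the geodesic $[1,h_0]$ also joins two points of $H$, so it lies in $N_K(H)$, whence its left translate $g^{-1}[1,h_0] = [g^{-1}, hg^{-1}]$ (using $g^{-1}h_0 = hg^{-1}$) is a geodesic lying in $N_K(g^{-1}H)$.

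First I would assemble these into a geodesic quadrilateral with vertices $1,\, h,\, hg^{-1},\, g^{-1}$: one side $\alpha = [1,h]$ lies in $N_K(H)$, the opposite side $\beta = [g^{-1}, hg^{-1}]$ lies in $N_K(g^{-1}H)$, and the other two sides are labelled by $g^{-1}$ and $g$, hence have length $|g|$. Next I would translate the hypothesis into a separation statement: the shortest element of $HgH$ has length $d(H,gH)$, since $|h_1 g h_2| = d(h_1^{-1}, gh_2)$, and $HgH$ and $Hg^{-1}H = (HgH)^{-1}$ have the same length spectrum, so the hypothesis is equivalent to $d(H, g^{-1}H) > 2K + 2\delta$.

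Then I would run the thin-quadrilateral argument. A geodesic quadrilateral is $2\delta$-thin, so every point $x$ of $\alpha$ lies within $2\delta$ of one of the other three sides. If some $x$ were within $2\delta$ of $\beta$, then combining $x \in N_K(H)$ with $\beta \subset N_K(g^{-1}H)$ would produce $u \in H$ and $v \in g^{-1}H$ with $d(u,v) \le 2K + 2\delta$, i.e.\ an element of $Hg^{-1}H$ of length at most $2K + 2\delta$, contradicting the separation above. Hence no point of $\alpha$ comes $2\delta$-close to $\beta$, so by thinness every point of $\alpha$ is $2\delta$-close to one of the two short sides.

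The main obstacle is converting ``every point of $\alpha$ is close to a short side'' into a bound on $|h|$ that is \emph{independent of} $|g|$: the crude estimate only places such points within $2\delta + |g|$ of a corner, giving $|h| \le 4\delta + 2|g|$, whereas the stated bound $2K + 8\delta + 2$ does not mention $|g|$. The remedy is to use quasiconvexity at the two corners rather than charging the full length $|g|$ of a short side: near the corner $1$ the side $[g^{-1},1]$ runs from $g^{-1} \in g^{-1}H$ to $1 \in H$, and symmetrically at the corner $h$, so I would measure how far along $\alpha$ one can travel before a point is forced to be simultaneously $K$-close to $H$ and $(2\delta + K)$-close to $g^{-1}H$, which again triggers the forbidden $2K + 2\delta$ separation. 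Carrying this corner analysis through, with the slimness constants bookkept as multiples of $\delta$ and the ``$+2$'' absorbing the passage from strict inequalities to integer lengths, should yield $|h| < 2K + 8\delta + 2$ for every $h$ in the intersection, and finiteness is then immediate.
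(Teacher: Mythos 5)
The paper does not actually prove this lemma; it quotes it as Lemma 1.2 of \cite{G-M-R-S} (see also \cite{Ar}), so your argument has to stand on its own. Your setup is the standard one and the first half is correct: the quadrilateral on $1, h, hg^{-1}, g^{-1}$, the identification of the hypothesis with $d(H,g^{-1}H)>2K+2\delta$, and the exclusion of the side $\beta$ all work. The genuine gap is exactly the step you flag as ``the main obstacle,'' and the remedy you sketch does not close it. A point of $\alpha$ that is $2\delta$-close to the side $[g^{-1},1]$ is not thereby close to $g^{-1}H$: that side is just a geodesic from a point of $H$ to a point of $g^{-1}H$, and no quasiconvexity hypothesis constrains its interior to stay near either coset (it can wander a distance comparable to $|g|$ from both). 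So the ``forbidden $2K+2\delta$ separation'' is never triggered from the corner sides, and the corner analysis stalls at the $|g|$-dependent estimate you already recognized as insufficient.

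The missing idea is minimality of $|g|$ in the double coset. If $|g|=\min\{|u| : u\in HgH\}$, then for $x\in\alpha$ within $2\delta$ of $y\in[1,g^{-1}]$ and within $K$ of $u\in H$, the element $u^{-1}g^{-1}\in Hg^{-1}H$ gives $|g|\le |u^{-1}g^{-1}|\le (K+2\delta)+d(y,g^{-1})$, while $|g|=d(1,y)+d(y,g^{-1})$ because $y$ lies on $[1,g^{-1}]$; subtracting yields $d(1,y)\le K+2\delta$, hence $d(1,x)\le K+4\delta$, and symmetrically at the corner $h$, which gives $|h|<2(K+4\delta+1)$. Be aware that this normalization is not free: replacing $g$ by $gh_2$ conjugates the intersection by $h_2$ and changes lengths, and the uniform bound $2K+8\delta+2$ genuinely fails for non-minimal representatives. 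For instance, in $G=\langle s,b,c \mid s^2,\ [s,b]\rangle \cong (\mathbb{Z}/2\times\mathbb{Z})*\mathbb{Z}$ with $H=\langle s,c\rangle$ and $g=b^{50}c^{100}$, every element of $HgH=Hb^{50}H$ has length at least $50$, yet $c^{-100}sc^{100}\in H\cap g^{-1}Hg$ has length $201$. So the length bound requires $g$ to be a shortest element of $HgH$; the finiteness of $H\cap g^{-1}Hg$, which is all the paper uses, then transfers to every representative of the double coset by Remark \ref{R:ShortRep}. Your proposal, which tries to prove the bound for arbitrary $g$ satisfying the stated hypothesis, is attempting something that is false as stated and therefore cannot be completed along the proposed lines.
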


Lemma \ref{L:SmallIntersection} implies that if $H$ is $K$-quasiconvex and $G$ is $\delta$-negatively curved, then $WeakWidth(G,H)$ is finite. Indeed, as $G$ is finitely generated, there exists a finite number $N$ of elements in $G$ of length not greater  than $2K+ 2 \delta$, hence there exist at most $N$  elements $\{ g_i \in G \}$  such that the shortest representative of the double coset $Hg_iH$ is not longer than  $2K+ 2 \delta$. It follows from Lemma \ref{L:SmallIntersection}  that if a conjugate $g_0^{-1} H g_0$ has infinite intersection with $H$ then $g_0$ belongs to one of the double cosets $\{ Hg_iH | 1 \le i \le N \}$. However these double cosets need not be distinct, hence the number of conjugates  of 
$H$ by elements in different double cosets of $H$ which  have infinite intersection with $H$ is not greater than $N$. Therefore $WeakWidth(H,G) \leq N$.

These observations  can be refined further to give rise to a procedure for computing $WeakWidth(H)$.
\\
\\
\textbf{An algorithm for computing the weak width  of a quasiconvex subgroup of a negatively curved group.}

Input: a finite presentation $<X|R>$ for a group $G$ and a constant $\delta$ (not nessesary minimal) of negative curvature of $G$, a finite generating set for a subgroup $H \subset  G$ and a quasiconvexity constant $K$ (not necessarily minimal) of $H$.

Output: a finite list $L$ of the representatives of  distinct double cosets of $H$ such that for any $g \in L$ the intersection $H \cap g_i^{-1} H g$ is infinite.
$WeakWidth(G,H)$ is the length of the list $L$.

\begin{enumerate}
\item Make a list $L'$ of all words in $X^*$ which are geodesic representatives of distinct elements of $G$ with length not greater than $2K + 2\delta$.

As was mentioned already, this can be accomplished in finite time using the Dehn algorithm on the set of all reduced words in $X^*$ of length not greater than $2K + 2\delta$.

\item Form a sublist $L''$ consisting of elements of
 $L'$ which belong to distinct double cosets of $H$ in $G$.

To carry out this step note that the distance between any pair of elements $g_i$ and $g_j$ in $L'$ is not greater than $2 \cdot (2K + 2 \delta)$, hence a geodesic $\gamma$ in $Cayley(G,H)$ joining $Hg_i$ and $Hg_j$ is not longer than $4K + 4 \delta$. As $g_i$ and $g_j$ belong to the same double coset of $H$ if and only if $Lab(\gamma) \in H$ and, as was mentioned in section $3$, the membership problem for quasiconvex subgroups of negatively curved groups is solvable, this step can be completed in finite time.

\item Initialize the list $L$ by setting $L = 1_G$. For each $g_i \in L''$ if the cardinality of the intersection $H \cap g_i^{-1} H g_i$ is  infinite add $g_i$ to $L$. 

To carry out the third step use the algorithm developed  in section $3$.

\end{enumerate}

As was already mentioned, $H$ is almost malnormal in $G$ if and only if

$WeakWidth(G,H)=1$, hence the above algorithm also decides almost malnormality of $H$.

If $G$ is torsion-free, then malnormality is equivalent to almost malnormality, hence the above algorithm decides the malnormality of $H$. This observation implies the result of Baumslag, Miasnikov and Remeslennikovin that malnormality is decidable in free groups (\cite{B-M-R}). In general, malnormality is not decidable in negatively curved groups (\cite{B-W}). It is possible that it might be decidable for quasiconvex subgroups of negatively curved groups, but it  seems unlikely. Indeed, the above algorithm checks if the intersection  $H \cap g_i^{-1} H g$ is trivial for each $g_i^{-1} \in L''$. As the set of double cosets $HgH$ is infinite (\cite{Ar}), there are infinitely many representatives $s_i$ of distinct double cosets $Hs_iH$, which are not on the list $L''$.  Lemma \ref{L:SmallIntersection} states that for any such $s_i$ the intersection  $H \cap s_i^{-1} H s$ is finite and consists of elements shorter than $2K + 8 \delta +2$. As the membership problem for $H \cap s_i^{-1} H s$ is solvable, we can determine if this intersection  is trivial by checking if it contains finitely many elements shorter than $2K + 8 \delta +2$.
If $H$ is not malnormal, this procedure will detect
non-trivial intersection $H \cap s_i^{-1} H s$ after finitely many repetitions (the number of repititions does not have a good estimate), but if $H$ is malnormal, the procedure will not terminate in a finite time.
\\
\section{Algorithms for computing the width and the height of a quasiconvex subgroup of a negatively curved group.}

\begin{remark}\label{R:CompWidth}
In order to compute  $Width(G,H)$  we can use the algorithm from section $3$ repeatedly on the elements of the list $L$, constructed by the algorithm in section $4$, to build a list $L_1$ of representatives of  distinct double cosets of $H$ such that for any pair of elements $g_i$ and $g_j$  in $L_1$ the intersection $g_i^{-1} H g_i \cap g_j^{-1} H g_j$ is infinite.

Note that there might exist an element $g \notin L_1$     such that for some $g_j \in L_1$ the following holds: $g$ is a shortest representative of the coset $Hg$, $g \in Hg_jH, Hg \neq Hg_j$ but
for all $g_i \in L_1$ all the intersections  $g_i^{-1} H g_i \cap g^{-1} H g$ are infinite. In general, there might be even infinitely many elements like $g$, and all of them might be contributing to $Width(G, H)$, however, if $H$ is $K$-quasiconvex and $G$ is $\delta$-negatively curved, Remark \ref{R:FindingWidth} shows that there are only finitely many of them. In order to list all such elements, we use the proof of Lemma 1.3 in \cite{G-M-R-S} to deduce the following fact.
\end{remark}

\begin{lemma}\label{L:Width}
Let $g_i \in L_1$ be a non-trivial element of $G$.   Let $g$ be a shortest representative of the coset $Hg$ such that $Hg \neq Hg_i, g \in Hg_iH$ and $g^{-1} H g \cap g_i^{-1} H g_i$ is infinite.   Then there exists a decomposition $g_ig^{-1}=h_is_ik_i$ with $h_i \in H, k_i \in H, |s_i| \le 2K +3 \delta, |k_i| < 6K +21 \delta$ and $h_is_ik_i$ shortest possible.
\end{lemma}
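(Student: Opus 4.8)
The plan is to adapt the proof of Lemma~1.3 of \cite{G-M-R-S} to the coset-graph setting used here. I would first replace the hypothesis on the intersection by a single infinite-order element. Since $H$ and its conjugates are quasiconvex in the negatively curved group $G$, the intersection $g^{-1}Hg \cap g_i^{-1}Hg_i$ is quasiconvex, hence negatively curved; being infinite, it contains an element $c$ of infinite order. Put $a = gcg^{-1}$ and $b = g_i c g_i^{-1}$. Because $c$ lies in both $g^{-1}Hg$ and $g_i^{-1}Hg_i$, both $a$ and $b$ lie in $H$ and have infinite order, and for every $n$ one has $gc^{n}g^{-1}=a^{n}$ and $g_i c^{n} g_i^{-1}=b^{n}$. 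Writing $w=g_ig^{-1}$, a direct computation gives the relation $w a^{n} w^{-1}=b^{n}$ for all $n$, with $|a^{n}|,|b^{n}|\to\infty$.

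Next I would extract the short middle factor $s_i$ from a single thin geodesic quadrilateral. The relation above is the loop $a^{-n} w^{-1} b^{n} w = 1$, which I realize as a geodesic quadrilateral in $Cayley(G)$ with corners $1,\ a^{-n},\ a^{-n}w^{-1},\ w^{-1}$, whose two long sides are the $H$-geodesic $[1,a^{-n}]\subset N_K(H\cdot 1)$ and the translate $a^{-n}w^{-1}\cdot[1,b^{n}]$ of an $H$-geodesic, joined by two short sides of length $|w|$. By $K$-quasiconvexity of $H$ the two long sides lie in $K$-neighborhoods of cosets of $H$, and by $2\delta$-thinness of the quadrilateral they $2\delta$-fellow-travel over a segment whose length grows with $n$. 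Projecting a fellow-travelling pair of vertices back into $H$ at cost $K$ on each side yields $h,h'\in H$ with $d(h,wh')\le 2K+2\delta$; then $s_i:=h^{-1}wh'$ lies in $HwH=Hg_ig^{-1}H$ and, after replacing it by a geodesic representative, satisfies $|s_i|\le 2K+3\delta$. This gives the decomposition $g_ig^{-1}=w=h_i s_i k_i$ with $h_i=h\in H$ and $k_i=(h')^{-1}\in H$, where $h_i$ records the (possibly long) position along the $H$-side at which the connector is taken. Note that this reproduces Lemma~\ref{L:SmallIntersection} uniformly in $n$ and without invoking the quasigeodesic axis of $c$.

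The hard part, and the genuinely new content, is the bound $|k_i|<6K+21\delta$, and this is exactly where the hypothesis that $g$ is a \emph{shortest} representative of its coset $Hg$ enters. The key observation is that $|g|$ equals the distance in $Cayley(G,H)$ from the basepoint $H\cdot 1$ to the vertex $Hg$, so a geodesic realizing $g$ projects to a geodesic in the coset graph. Moreover, on the short sides of the quadrilateral above the vertex $g$ and a representative $a^{-n}g\in Hg$ appear as controlled subdivision points, within bounded distance of the corner region where the fellow-travelling begins. I would choose the decomposition $g_ig^{-1}=h_i s_i k_i$ to be shortest possible and argue by contradiction: a long $k_i$ would force the coset geodesic to $Hg$ to run alongside the image of the fellow-travelling region for a stretch longer than the thinness and quasiconvexity constants permit, so rerouting through $H$ at the point where fellow-travelling begins would replace $g$ by a strictly shorter representative of $Hg$, contradicting minimality of $g$, or replace $h_is_ik_i$ by a strictly shorter product, contradicting the choice of decomposition. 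Quantifying ``longer than permitted'' by chaining the estimates -- the $|g|$-jump, the two $K$-quasiconvexity costs, and the $2\delta$-thinness bounds applied to the relevant triangles -- is what produces the explicit constant $6K+21\delta$.

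I expect the main obstacle to be this bookkeeping rather than any single conceptual step: one must keep track of which left translate of $H$ each projected vertex belongs to, pass cleanly between $Cayley(G)$ and $Cayley(G,H)$, and locate the beginning of the fellow-travelling region precisely enough that minimality of $g$ bites. Once $|s_i|$ and $|k_i|$ are bounded, minimality of the product pins down $h_is_ik_i$ as the shortest such expression, which is the assertion of the lemma; the finiteness needed for the width computation in Remark~\ref{R:CompWidth} then follows because only finitely many pairs $(s_i,k_i)$ of bounded length can occur.
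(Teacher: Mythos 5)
Your first two steps are sound and essentially reproduce what the paper does more cheaply by citation: from the infinitude of $g^{-1}Hg\cap g_i^{-1}Hg_i$ one gets that $gg_i^{-1}Hg_ig^{-1}\cap H$ is infinite, and then Lemma \ref{L:SmallIntersection} (which your quadrilateral argument with $wa^nw^{-1}=b^n$ re-derives) already yields $g_ig^{-1}\in HsH$ with $|s|\le 2K+2\delta$, hence a decomposition $g_ig^{-1}=h_is_ik_i$ with $|s_i|\le 2K+3\delta$. The paper gets exactly this far in its first two sentences and then imports the rest verbatim from the first three lines and the Proposition in the proof of Lemma 1.3 of \cite{G-M-R-S}.

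The genuine gap is the bound $|k_i|<6K+21\delta$, which is the entire point of the lemma (it is what makes the candidate set $A_i$ in Remark \ref{R:FindingWidth} finite), and your proposal does not prove it. What you offer is a strategy -- ``a long $k_i$ would force the coset geodesic to run alongside the fellow-travelling region for too long, so rerouting would shorten $g$ or shorten $h_is_ik_i$'' -- with no geometric configuration specified, no inequalities chained, and an unresolved ``or'' between two different contradictions, which signals that the mechanism producing the constant is not actually identified. In particular you never use the inequality $|g|\ge|g_i|$, which holds because $g_i$ is a shortest representative of its double coset and $g\in Hg_iH$; this orientation of the picture (knowing which of the two conjugating elements is the longer one) is what the Proposition in \cite{G-M-R-S} needs in order to decide at which end of the quadrilateral the rerouting takes place and to extract the explicit $6K+21\delta$. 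As written, your argument establishes only that some decomposition $h_is_ik_i$ with short $s_i$ exists, i.e.\ it re-proves Lemma \ref{L:SmallIntersection} but not Lemma \ref{L:Width}; the quantitative control of $k_i$ would have to be supplied, either by carrying out the thin-quadrilateral estimate in full or, as the paper does, by invoking the Proposition from the proof of Lemma 1.3 of \cite{G-M-R-S}.
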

\begin{proof}
As  $g_i \in L_1$, it follows that $|g_i| \le 2K + 2 \delta$ and $g_i$ is a shortest representative of the double coset $Hg_iH$, so  $|g| \ge |g_i|$.

As  $g^{-1} H g \cap g_i^{-1} H g_i$ is infinite, it follows that $g g_i^{-1} H g_i g^{-1} \cap H$ is infinite, so Lemma \ref{L:SmallIntersection} implies that $g_i g^{-1} \in HsH$ with $|s| \le
2K + 2 \delta$.

Setting $g=g_n$, the proof of Lemma 1.3 in \cite{G-M-R-S} can be applied to the products $g_i g^{-1}$. Then the existence of the desired decomposition $g_ig^{-1}=h_is_ik_i$ follows from the first $3$ lines of the proof of Lemma 1.3 and the Proposition on the fourth line.
\end{proof}

\begin{remark}\label{R:FindingWidth} 
 Lemma \ref{L:Width} suggests the following way for listing all the elements $g$ described in Remark \ref{R:CompWidth}. Let $\gamma_1 \gamma_2 \gamma_3$ be a triangle in $Cayley(G)$ with $\gamma_1$ beginning at the base point $1_G$, $Lab(\gamma_1) \equiv h_is_ik_i$, $Lab(\gamma_2) \equiv g$ and $Lab(\gamma_3) \equiv g_i^{-1}$. Note that $\gamma_2$ and $\gamma_3$ are geodesics, but $\gamma_1$ might not be one. Consider the projection of that triangle in $Cayley(G,H)$. Then $\pi_H(\gamma_1)$ begins at the base point $H \cdot 1$, and ends at $H \cdot Lab(\gamma_1) = H \cdot h_is_ik_i =H \cdot s_ik_i$. Lemma \ref{L:Width} states that $|s_ik_i| \le |s_i| + |k_i| \le (2K + 3 \delta) +(6K + 21 \delta) \le 8K + 24 \delta$. As the projection map does not increase distances, it follows that $H \cdot s_1h_2 \in N_{(|s_i| + |k_i|)}(H \cdot 1) \subset N_{(8K + 24 \delta)}(H \cdot 1)$. As $g_i$ is a shortest representative of the double coset $Hg_iH$ and $\pi_H(\overline{\gamma_3})$ begins at $H \cdot 1$, it follows that $|\pi_H(\overline{\gamma_3})|=|\overline{\gamma_3}|=|g_i| \le 2K + 2 \delta$.
Hence $g^{-1}$ is a label of a shortest geodesic in $Cayley(G,H)$ beginning at $H \cdot g_i$ and ending in 
$N_{(8K + 24 \delta)}(H \cdot 1)$. As the latter neighborhood is finite, there are finitely many potential $g$, so we can list them in a finite list $A_i$. Afterwards, for any $g \in A_i$ we check if
$g \in Hg_iH$, then we check if $g$ is a shortest representative of the coset $Hg$, then we check if $Hg \neq Hg_i$ and we conclude by checking if $g^{-1} H g \cap g_i^{-1} H g_i$ is infinite. If $g$ passes all these tests, we check for all $g_j \in L_1$ if the intersection  $g_j^{-1} H j_i \cap g^{-1} H g$ is infinite. If it is, $g$ might be contributing to $Width(G,H)$.
\end{remark}

This discussion can be summarized as follows.

\textbf{An algorithm for computing the width  of a quasiconvex subgroup of a negatively curved group.}

Input: a finite presentation $<X|R>$ for a group $G$ and a constant $\delta$ (not nessesary minimal) of negative curvature of $G$, a finite generating set for a subgroup $H \subset  G$ and a quasiconvexity constant $K$ (not necessarily minimal) of $H$.

Output: a finite list $L_w$ of the representatives of  distinct cosets of $H$ such that for any $g_i \in L_w$ and $g_j \in L_w$ the intersection $g_j^{-1}H g_j\cap  g_i^{-1} H g$ is infinite. $Width(G,H)$ is the length of the list $L_w$.
 
\begin{enumerate}

\item Run the algorithm for computing the weak width  of a quasiconvex subgroup of a negatively curved group, producing the list $L$. If $L = 1_G$, stop and output $L_w=1_G$.
\item If $|L| > 1$ modify $L$, using the algorithm from section $3$, as follows: for any $g_i \in L$ check if for any $g_j \in L$ with $j >i$ the intersection $g_i^{-1} H g_i \cap g_j^{-1} H g_j$ is finite. If positive, remove $g_j$ from the list $L$.
The resulting list $L_1$ consists of representatives of  distinct double cosets of $H$ such that for any pair of elements $g_i$ and $g_j$  in $L_1$ the intersection $g_i^{-1} H g_i \cap g_j^{-1} H g_j$ is infinite.

\item  Using Remark \ref{R:FindingWidth} for each $g_i \in L_1$ construct a finite set $A_i \subset G$  such that any $g_{i,j} \in A_i$ has the following properties: $g_{i,j} \in Hg_iH$, $g_{i,j}$ is a shortest representative of the coset $Hg_{i,j}$,  $Hg_{i,j} \neq Hg_i$ and $g_{i,j}^{-1} H g_{i,j} \cap g_i^{-1} H g_i$ is infinite. 

\item Initialize $L_w = L_1$. Let $A= \cup A_i$. If $A=\emptyset$ stop and output $L_w$. Otherwise enlarge the list $L_w$  as follows. For any $a_i \in A$,  using the algorithm from section $3$, check if for all $g_j \in L_1$ the intersections $a_i^{-1} H a_i \cap g_j^{-1} H g_j$ are infinite. If positive, add $a_i$ to the list $L_w$. Output $L_w$. It has the desired properties by construction.
\end{enumerate}

This construction shows that there is no obvious relationship between the width and the weak width even for a quasiconvex subgroup $H$ of a negatively curved group $G$. We can only observe that $|L_1| \le |L|=WeakWidth(G,H)$ and
$Width(G,H)=|L_w| \ge |L_1|$.

As was mentioned already, $H$ is almost malnormal in $G$ if and only if $Width(G,H)=1$, hence the above algorithm also decides the almost malnormality of $H$.
\\

In order to compute $Height(G,H)$ note that the list $L_w$ contains all elements $g_i$ in different cosets of $H$ such that the intersections $g_i^{-1} H g_i \cap  H$ is infinite, so we need to determine a sublist $L_h \subseteq L_w$ such that the mutual intersection of the conjugates of $H$ by the elements in $L_h$ is infinite.
We can do it as follows.

\textbf{An algorithm for computing the height  of a quasiconvex subgroup of a negatively curved group.}

Input: a finite presentation $<X|R>$ for a group $G$ and a constant $\delta$ (not nessesary minimal) of negative curvature of $G$, a finite generating set for a subgroup $H \subset  G$ and a quasiconvexity constant $K$ (not necessarily minimal) of $H$.

Output: a finite list $L_h$ of the representatives of  distinct cosets of $H$ such that the collection of conjugates of $H$ by all elements of $L_h$ has infinite intersection. $Height(G,H)$ is the length of the list $L_h$.
\begin{enumerate}

\item Run the algorithm for computing the width  of a quasiconvex subgroup of a negatively curved group, producing the list $L_w$. If $L_w = 1_G$, stop and output $L_h=1_G$.
\item If $|L_w| > 1$, hence $L_w= \{1_G, g_2, \cdots, g_n \}$, initialize $L_h =1_G$. By definition of $L_w$ the intersection $H \cap g_2^{-1}Hg_2$ is infinite. Add $g_2$ to $L_h$. If $n=2$ stop and output $L_h$.

\item If $n > 2$  enlarge $L_h$ by adding to it elements of $L_w$ in such way that the intersection of all conjugates of $H$ by elements of $L_h$ remains infinite. Proceed inductively as follows. For $g_3 \in L_w$, using the discussion in Remark \ref{R:Algorithm For Pairs}, check if the intersection $H \cap g_2^{-1}Hg_2 \cap g_3^{-1}Hg_3$ is infinite. If positive, add $g_3$ to $L_h$. Assume that for $i <j \le n$ we have decided if $g_i \in L_w$ should be added to $L_h$ and the intersection of all conjugates of $H$ by elements in $L_h$ is infinite. Check if the intersection of the aforementioned intersection with $g_j^{-1} H g_j$ is infinite. If positive, add $g_j$ to $L_w$. Repeat until $j=n$.
\item Stop and output $L_h$. 

\end{enumerate}

As $H$ is almost malnormal if and only if $Height(G,H)=1$ this algorithm also decides almost malnormality of $H$.

\begin{remark}\label{R:MoreIntersPairs}As was already mentioned, in order to determine if the intersection of a family of conjugates is infinite using the discussion in Remark \ref{R:Algorithm For Pairs}, we should provide a finite generating set for that intersection. It was shown by the author in \cite{Gi2} that a $K$-quasiconvex subgroup $H$ of a finitely generated group $G$ is generated by a set of elements  not longer than $2K +1$. Such generating set can be effectively listed if $H$ has a solvable generalized word problem in $G$, which is the case if $G$ is negatively curved. So it is sufficient to find a quasiconvexity constant (not necessarily the minimal one) of the aforementioned intersection.
\end{remark}

First, we find a quasiconvexity constant for each member of the aforementioned family of conjugates.

\begin{lemma}\label{L:QuasiconvexityConstantForConjug}
Let $H$ be a $K$-quasiconvex subgroup of a $\delta$-negatively curved group $G$. For any $g \in G$ the conjugate $g^{-1} H g$  is $K_g$-quasiconvex with $K_g = K + 2 \delta + 2 |g|$.
\end{lemma}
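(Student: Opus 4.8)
The plan is to reduce the quasiconvexity of $g^{-1}Hg$ to that of $H$ by a single application of the thin-quadrilateral property of $Cayley(G)$, using that left multiplication by $g$ is an isometry of $Cayley(G)$. First I would take two arbitrary elements of $g^{-1}Hg$, say $g^{-1}h_1g$ and $g^{-1}h_2g$ with $h_1,h_2 \in H$, and a geodesic $\gamma$ joining them in $Cayley(G)$; the goal is to show that every vertex of $\gamma$ lies in $N_{K_g}(g^{-1}Hg)$. Left-multiplying the whole configuration by $g$ turns $\gamma$ into a geodesic $g\gamma$ from $h_1g$ to $h_2g$ and turns the target set $g^{-1}Hg$ into $Hg$. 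It therefore suffices to bound the distance from each vertex of $g\gamma$ to $Hg$, and since every $hg \in Hg$ is at distance $|g|$ from $h \in H$, I would first bound the distance from $g\gamma$ to $H$ and add $|g|$ only at the end.

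Next I would build the geodesic quadrilateral with consecutive vertices $h_1, h_1g, h_2g, h_2$. Its four sides are: $g\gamma$ (from $h_1g$ to $h_2g$); the two ``vertical'' sides $[h_1, h_1g]$ and $[h_2g, h_2]$, each of length $|g|$ since $|h_i^{-1}(h_ig)| = |g|$; and a geodesic $\sigma$ from $h_2$ to $h_1$, whose endpoints lie in $H$. By $K$-quasiconvexity of $H$ we have $\sigma \subset N_K(H)$. Since geodesic quadrilaterals in a $\delta$-negatively curved group are $2\delta$-thin (the convention already used in the proof of Lemma \ref{L:ConstrGenForIntes}), each vertex of $g\gamma$ lies in $N_{2\delta}$ of the union of the other three sides. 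A vertex within $2\delta$ of one of the vertical sides is within $2\delta + |g|$ of the endpoint $h_i \in H$, since that side has length $|g|$; a vertex within $2\delta$ of $\sigma$ is within $2\delta + K$ of $H$. Hence every vertex of $g\gamma$ lies in $N_{2\delta + \max(K,|g|)}(H)$.

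Finally I would translate back. Passing from $H$ to $Hg$ costs an additional $|g|$, so $g\gamma \subset N_{2\delta + \max(K,|g|) + |g|}(Hg)$, and applying the isometry $x \mapsto g^{-1}x$ (which sends $g\gamma$ to $\gamma$ and $Hg$ to $g^{-1}Hg$) gives $\gamma \subset N_{2\delta + \max(K,|g|)+|g|}(g^{-1}Hg)$. Since $2\delta + \max(K,|g|) + |g| \le K + 2\delta + 2|g| = K_g$, this yields the claimed constant. The step I would watch most carefully is precisely this final bookkeeping: the conjugating element $g$ contributes its length $|g|$ twice, once inside the quadrilateral (controlling how far the vertical sides reach away from $H$) and once more in converting a neighborhood of $H$ into a neighborhood of $Hg = g\cdot g^{-1}Hg$, and it is exactly this double contribution that produces the $2|g|$ rather than a single $|g|$ in $K_g$.
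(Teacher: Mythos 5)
Your argument is correct and is essentially the paper's own proof in translated coordinates: the paper forms the geodesic quadrilateral $pqr\overline{\gamma}$ with vertices $1_G,\ g^{-1},\ g^{-1}h,\ g^{-1}hg$, which is exactly your quadrilateral $h_1,\ h_1g,\ h_2g,\ h_2$ (with $h=h_1^{-1}h_2$) after left multiplication by $h_1g$, and it likewise applies $2\delta$-thinness to land near the middle side, $K$-quasiconvexity of $H$ on that side, and then pays $|g|$ twice. Your intermediate bound $2\delta+\max(K,|g|)+|g|$ is marginally sharper than the paper's $2\delta+|g|+K+|g|$, but both yield the stated $K_g=K+2\delta+2|g|$.
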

\begin{proof}
Let $g \in G$, let $h \in H$, and let $\gamma$ be a geodesic in $Cayley(G)$ beginning at $1_G$ with $Lab(\gamma)=g^{-1}hg$. We need to show that $\gamma \subset N_{K_g}(g^{-1}Hg) \subset Cayley(G)$. Consider a path $pqr \subset Cayley(G)$ beginning at $1_G$ such that $p, q $ and $r$ are geodesics, $Lab(p)= g^{-1}, Lab(q) = h$ and $Lab(r)=g$. Then $pqr$ and $\gamma$ have the same terminal vertex, so $pqr \overline{\gamma}$ is a geodesic $4$-gon, hence $\gamma \subset N_{2\delta}(pqr) \subset N_{2 \delta + |g|} (q)$. Let $v$ be a vertex in $\gamma$ and let $w$ be a vertex in $q$ such that $|v, w| \le 2 \delta + |g|$.
Let $q_1q_2$ be a decomposition of $q$ with $q_1$ terminating at $w$. As $H$ is $K$-quasiconvex, there exists a path $s$ shorter than $K$, beginning at $w$, such that $Lab(q_1s) \in H$. Let $t$ be a path beginning at the endpoint of $s$ with $Lab(t)=g$. Then the endpoint $u$ of $t$ is $Lab(pq_1st) \in g^{-1}Hg$ and $|v, u| \le |v, w| + |w, u| \le (|g| + 2 \delta) + (K  + |g|)=K_g$, proving the lemma.
\end{proof}

To find a quasiconvexity constant of the intersection of a finite family of quasiconvex subgroups it is sufficient to determine a quasiconvexity constant (not necessersly minimal) of the intersection of two quasiconvex subgroups. We will do that following the proof of Lemma 2.1 in \cite{Gi3}.

\begin{lemma}\label{L:QusiconvConstantForAnIntersOfaPair}
Let $A$ and $B$ be $K$-quasiconvex subgroups of a finitely generated group $G$. Let $M_A$ be the number of vertices in $N_K(A \cdot 1) \subset Cayley(G,A)$ and let $M_B$ be the number of vertices in $N_K(B \cdot 1) \subset Cayley(G,B)$. Then the intersection $A\cap B$ is $K_0$-quasiconvex in $G$ with $K_0 = M_A \cdot M_B$.
\end{lemma}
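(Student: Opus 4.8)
The plan is to use the geometric characterization of quasiconvexity via the geodesic core, namely Remark \ref{R:GeodCoreFinite}: to conclude that $A \cap B$ is $K_0$-quasiconvex it suffices to show that $Core(G, A\cap B) \subset N_{K_0}((A\cap B)\cdot 1)$ inside $Cayley(G, A\cap B)$. Since by definition $Core(G, A\cap B)$ is the union of all closed geodesics based at $(A\cap B)\cdot 1$, I would bound an arbitrary such closed geodesic and then take the union.

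The key device is the fiber-product embedding of coset graphs. First I would record the map on vertices $(A\cap B)g \mapsto (Ag, Bg)$ from $Cayley(G,A\cap B)$ into $Cayley(G,A) \times Cayley(G,B)$ and observe that it is injective: if $(Ag, Bg) = (Ag', Bg')$ then $gg'^{-1} \in A$ and $gg'^{-1} \in B$, so $gg'^{-1} \in A\cap B$, i.e. $(A\cap B)g = (A\cap B)g'$. This map also preserves labels of edges, so it carries paths to paths.

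Next I would take a closed geodesic $c$ based at $(A\cap B)\cdot 1$. By the definition of geodesics in a coset graph it is the projection $\pi_{A\cap B}(\tilde\gamma)$ of some geodesic $\tilde\gamma$ in $Cayley(G)$, which may be taken to begin at $1_G$; its label $w \equiv Lab(\tilde\gamma)$ then satisfies $w \in A\cap B$. Projecting $\tilde\gamma$ to $Cayley(G,A)$ yields a geodesic beginning at $A\cdot 1$ labeled by $w \in A$, so Remark \ref{R:GeodesicsInCosetGraph} places every one of its vertices in $N_K(A\cdot 1)$; symmetrically, every vertex of $\pi_B(\tilde\gamma)$ lies in $N_K(B\cdot 1)$. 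Combining this with the injection above, every vertex of the closed geodesic $c$ maps to a vertex of $N_K(A\cdot 1) \times N_K(B\cdot 1)$, a set of at most $M_A \cdot M_B = K_0$ elements.

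Finally I would conclude with a diameter estimate. Because the fiber-product map is injective on vertices, $c$ passes through at most $K_0$ distinct vertices; being the image of a single path, $c$ is a connected subgraph of $Cayley(G,A\cap B)$ containing $(A\cap B)\cdot 1$. A connected graph on at most $K_0$ vertices has diameter at most $K_0 - 1$, so every vertex of $c$ lies in $N_{K_0}((A\cap B)\cdot 1)$. Taking the union over all closed geodesics based at $(A\cap B)\cdot 1$ gives $Core(G,A\cap B) \subset N_{K_0}((A\cap B)\cdot 1)$, and Remark \ref{R:GeodCoreFinite} yields $K_0$-quasiconvexity. The main point to handle carefully is the bookkeeping that ties the fiber-product injection to the two projections—verifying that the common label $w$ lies simultaneously in $A$ and in $B$ so that both instances of Remark \ref{R:GeodesicsInCosetGraph} apply—while the concluding diameter bound is the only genuinely quantitative step and is precisely where the product $M_A M_B$ enters.
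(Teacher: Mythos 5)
Your proposal is correct and follows essentially the same route as the paper: embed into the fiber product $N_K(A\cdot 1)\times N_K(B\cdot 1)$ to bound the number of vertices by $M_A\cdot M_B$, then use connectedness through the base point and Remark \ref{R:GeodCoreFinite} to conclude. The only difference is cosmetic --- you spell out the fiber-product embedding that the paper cites from Lemma 2.1 of \cite{Gi3}, and you run the diameter count on each closed geodesic separately rather than on the whole core at once.
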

\begin{proof}
The proof of Lemma 2.1 demonstrates that $Core(G, A \cap B)\subset Cayley(G, A \cap B)$ embeds into $N_K(A \cdot 1) \times N_K(B \cdot 1) \subset Cayley(G,A) \times Cayley (G, B)$. Hence $Core(G, A \cap B)$ has no more than $M_A \cdot M_B$ vertices. Then, as $Core(G, A \cap B)$ is connected and contains $(A \cap B) \cdot 1$,  $Core(G, A \cap B)$ embeds in $N_{M_A \cdot M_B} (A \cap B) \cdot 1$. Therefore lemma follows from Remark \ref{R:GeodCoreFinite}.
\end{proof}
Note that Remark \ref{R:MoreIntersPairs} and Lemma \ref{L:QusiconvConstantForAnIntersOfaPair} can be used throughout the paper instead of Theorem \ref{T:GenSerFotINtersections}.

\section{Examples}

The following examples demonstrate that $WeakWidth(H,G)$, $Width(H,G)$, and $Height(H,G)$ are distinct invariants of the conjugacy class of $H$ in $G$.

\begin{example}\label{E:WWneqW}
Let $F$ be a free group of rank $4$ generated by the elements $x_1,x_2, x_3, x_{4}$, let $G = <F, t|t^4 =1, t^{-1} x_i t = x_{(i+1)mod4}| 1 \le i\le 4>$, and let $H_1=<x_1,x_2>$. We claim that $WeakWidth(H_1,G)=3$, but $Height(H_1,G)=Width(H_1, G)=2$.

In order to prove the claim we will list all  conjugates of $H_1$ in $G$ by elements in distinct cosets of $H_1$ and by elements in distinct double cosets of $H_1$ which have non-trivial intersection with $H_1$.

For $1\le i \le 4$ define $H_i = <x_i, x_{(i+1)mod4}| > = t^{(-i+1)}H_1 t^{(i-1)}$. As $t^i \notin F$ for $i \not\equiv 0\pmod{4}$, the subgroups $\{H_i| 1\le i \le 4 \}$ are conjugates of $H_1$ by elements in different double cosets of $H_1$. Note that $H_2 \cap H_1 = <x_2> ,H_4 \cap H_1 =<x_1>, H_3 \cap H_1 = <1>$, and $H_2 \cap H_4 = <1>$.  Hence $WeakWidth(H_1,G) \ge 3$,  $Height(H_1,G) \ge 2$, and $Width(H_1, G) \ge 2$.
        
In order to determine how other conjugates of $H_1$ intersect, consider $g \in G$  such that the intersection $g^{-1}H_1g \cap H_1$ is non-trivial. As we are interested only in conjugates of $H_1$ by elements in different cosets of $H_1$, we can assume that $g$ is a shortest element in the coset $H_1g$.

 As $t$ normalizes $F$, it follows that $g=wt^k, 0 \le k \le 3$, with $w$ a reduced word in $F$. If $w$ is trivial, then $g^{-1}H_1g= t^{-k}H_1t^k = H_{1+k}$, and the intersection pattern of the subgroups $\{H_i|1 \le i \le 4\}$ is described above.
 
 If $w$ is non-trivial, let $v \in H_1$ be a 
 non-trivial reduced word such that $g^{-1}vg = (t^{-k}w^{-1}) v (w t^k)  \in H_1$. Then $w^{-1} v w \in t^k H_1 t^{-k}  =t^{-(4-k)}H_1t^{4-k}  = H_{(1-k)mod4}$. As $w$ and $v$ are reduced words in a free group $F$, there exist decompositions $w \equiv w_1w_2$ and $v \equiv w_1v_0w_1^{-1}$ (where $\equiv$ denotes  equality of words) with 
 
 $w^{-1} v w = (w_2^{-1}w_1^{-1})(w_1v_0w_1^{-1})(w_1w_2) = w_2^{-1}v_0w_2$, where $w_2^{-1}v_0w_2$ is a reduced word in $H_{(1-k)mod4}$. Then $v_0 \in H_{(1-k)mod4}$ and  $w_2 \in H_{(1-k)mod4}$. As  $v \in H_1$, it follows that $w_1 \in H_1$ and $v_0 \in H_1$.
However, as $g=wt^k =w_1w_2t^k$ is shortest in the coset 
 $H_1g$, $w_1$ should be trivial. Hence $w=w_2 \in H_{(1-k)mod4}$. As a non-trivial word $v_0$ belongs to $H_1 \cap H_{(1-k)mod4}$, it follows that $(1-k)\pmod{4}$ is equal to either $1,2$ or $4$. Therefore, if $(1-k) \pmod4 \equiv3$, (so $k=2$), then for any $r \in F$ the intersection $ (rt^2)^{-1}H_1(rt^2) \cap H_1$ is trivial. 

If $(1-k)\pmod{4} \equiv1$ then $w=w_2 \in H_1$, contradicting again the fact that $g$ is shortest in the coset $H_1g$. Hence either $(1-k)\pmod{4} \equiv 2$ and $k=3$, or $(1-k)\pmod{4} \equiv4$ and $k=1$.

If $k=3$, then $g=wt^3$ with $w \in H_2$. Note that the  elements of the infinite collection  $\{(wt^3)^{-1}H_1(wt^3)| w \in H_2 \}$, which are conjugates by elements in different cosets of $H_1$, intersect each other trivially. Indeed, consider $w_0 \in H_2$ and $w \in H_2$ such that the intersection $(t^{-3}w^{-1})H_1(wt^3) \cap (t^{-3}{w_0}^{-1})H_1(w_0t^3)$  is non-trivial. Then the intersection $H_1 \cap (w_0t^3)(t^{-3}w^{-1})H_1(wt^3)(t^{-3} {w_0}^{-1})$ is  non-trivial. As $H_1$ is malnormal in $F$, it follows that  $w_0w^{-1}=(w_0t^3)(t^{-3}w^{-1}) \in H_1$, so the elements $wt^3$ and $w_0t^3$ belong to the same coset of $H_1$. Therefore the family of the conjugates
$\{(wt^3)^{-1}H_1(wt^3)| w \in H_2 \}$
does not contribute to $Width(H_1,G)$.

Similarly, if $k=1$, hence $g=ut$ with $u \in H_4$, the   elements of the infinite collection $\{(ut)^{-1}H_1(ut)| u \in H_4 \}$, which are conjugates by elements in different cosets of $H_1$,  intersect each other trivially.

Also for $w \in H_2$ and $u \in H_4$ the intersection $(t^{-3}w^{-1})H_1(wt^3) \cap (t^{-1}u^{-1})H_1(ut)$ is trivial. Indeed, the cardinality of that intersection is equal to the cardinality of the intersection
$(ut)(t^{-3}w^{-1})H_1(wt^3)(t^{-1}u^{-1}) \cap H_1$. However, $(wt^3)(t^{-1}u^{-1})=wt^2u^{-1}=(w(t^2u^{-1}t^{-2})t^2 = rt^2$ with $r \in F$, and we have mentioned above that for all $r \in F$ the intersection $ (rt^2)^{-1}H_1(rt^2) \cap H_1$ is trivial. So the infinite family of conjugates $\{(ut)^{-1}H_1(ut)| u \in H_4 \}$
does not contribute to $Width(H_1,G)$, therefore $Height(H_1,G)= Width(H_1,G)=2$. 

Note that for any $w \in H_2$,  $wt^3=t^3(t^{-3}wt^3) \in t^3H_1 \subseteq H_1t^3 H_1$, so the conjugates of $H_1$ by those elements do not contribute to the weak width of $H_1$. Similarly, all the elements $\{(ut)^{-1}| u \in H_4 \}$ belong to the double coset  $H_1tH_1$, so the conjugates of $H_1$ by those elements  do not contribute to the weak width of $H_1$ either.
Therefore, $WeakWidth(H,G)=3$.
 
$\hfill\square$
\end{example}

\begin{example}\label{WneH}
Let $G$ be as in Example \ref{E:WWneqW}, and let $L_1=<x_1,x_2, x_3>$. We claim that $WeakWidth(L_1,G)=Width(L_1, G)=4$, but $Height(L_1,G)=3$.

For $1\le i \le 4 $ define $L_i = <x_i, x_{(i+1)mod4}, x_{(i+2)mod4} > = t^{(-i+1)}L_1 t^{(i-1)}$.  
As $t^i \notin F$ for $i \not\equiv 0\pmod{4}$, the subgroups $\{L_i| 1\le i \le 4 \}$ are conjugates of $L_1$ by elements in different double cosets of $L_1$. 
By observation, the elements of the set $\{ L_i| 1 \le i \le 4 \}$ have infinite pairwise intersections, hence
$WeakWidth(L_1,G) \ge 4$ and $Width(L_1, G) \ge 4$. Also the intersection $\bigcap\limits_{i =1}^3 L_i$ is infinite, so $Height(L_1,G) \ge 3$. Note also that
the intersection $\bigcap\limits_{i =1}^4 L_i$ is trivial. 

Using the same argument as in Example \ref{E:WWneqW} we can show that there are only three families of subgroups, which are conjugates of $L_1$ by  elements in different cosets of $L_1$ in $G$, intersecting $L_1$ non-trivially. These elements are $\{(wt^3)| w \in L_2 \}, \{ ut| u \in L_4 \}$, and $ \{ st^2| s \in L_3 \}$. Just as in Example \ref{E:WWneqW}, the malnormality of $L_1$ in $F$ implies that the conjugates in each family intersect each other trivially, hence $Width(L_1, G)=4$. Also, as in Example \ref{E:WWneqW}, these elements belong to the same double cosets of $L_1$ as $t^3, t$, and $t^2$, respectively, so $WeakWidth(L_1,G)=4$.

Suppose $Height(H,G) \ge 4$. Then there are $3$  conjugates $M_2, M_3$, and $M_4$ of $L_1$ by elements in different cosets of $L_1$ such that the intersection $L_1 \cap (\bigcap\limits_{i=2}^4 M_i)$ is infinite. The preceding paragraph implies that the $M_i$'s must come one from each of  
the families of conjugates of $L_1$ described above, i.e. $M_2, M_3$ and $M_4$ are conjugates of $L_1$ by $wt^3, ut$, and $st^2$ respectively, with $w \in L_2, u \in L_4$, and $s \in L_3$. Let $h_1, h_2, h_3$, and $h_4$ in $L_1$ be such that $h_4 =t^{-3}w^{-1}h_1wt^3=t^{-2}s^{-1}h_2st^2=t^{-1}u^{-1}h_3ut$. Note that $t^{-3}wt^3 \in L_1, t^{-3}h_1t^3 \in L_4, t^{-2}st^2 \in L_1, t^{-2}h_2t^2 \in L_3, t^{-1}ut \in L_1$, and $t^{-1}h_3t \in L_2$. Then $t^{-3}w^{-1}h_1wt^3=r^{-1}_1q_1r_1$ with $r_1 \in L_1$ and $q_1 \in L_4$, $t^{-2}s^{-1}h_2st^2=r_2^{-1}q_2r_2$ with $r_2 \in L_1$ and $q_2 \in L_3$, and $t^{-1}u^{-1}h_3ut = r_3^{-1}q_3r_3$ with $r_3 \in L_1$ and $q_3 \in L_2$. 
As  $r^{-1}_1q_1r_1 = r_2^{-1}q_2r_2 = r_3^{-1}q_3r_3$,
 it follows that $q_2 = l_1^{-1}q_1l_1=l_2^{-1}q_3l_2$ with $l_1$ and $l_2 $ in $L_1$. We can assume that all the words $l_1, l_2, q_1, q_2$, and $q_3$ are reduced. Then, as in Example \ref{E:WWneqW}, there exist decompositions
$l_1 \equiv p_1p_2$ and $q_1 \equiv p_1q'_1p_1^{-1}$ such that $q_2 = (p_1p_2)^{-1}(p_1q'_1p_1^{-1})(p_1p_2)=p_2^{-1}q'_1p_2$, and $p_2^{-1}q'_1p_2$ is a reduced word in $F$.

 As 
$r^{-1}_1q_1r_1 = r_2^{-1}q_2r_2 = r_3^{-1}q_3r_3 = h_4 \in L_1$, it follows that $q_1 \in L_1 \cap L_4 =<x_1, x_2>, q_2 \in L_1 \cap L_3 = <x_1, x_3>$, and $q_3 \in L_1 \cap L_2 = <x_2, x_3>$. As $q_1 \in <x_1, x_2>$ and $q_2 \in <x_1, x_3>$, it follows that $q'_1 = x_1^n$ for $ n \in \textbf{N}$. 

Simirlarly, there exist decompositions $l_2 \equiv c_1c_2$ and $q_3 \equiv c_1q'_3c_1^{-1}$ such that $q_2 = (c_1c_2)^{-1}(c_1q'_3c_1^{-1})(c_1c_2)=c_2^{-1}q'_3c_2$, and $c_2^{-1}q'_3c_2$ is a reduced word in $F$. As $q_3 \in <x_2, x_3>$ and $q_2 \in <x_1, x_3>$, it follows that $q'_3 = x_3^m$ for $m \in \textbf{N}$. Then a conjugate of $q'_1 = x_1^n$ is equal to a conjugate of $q'_3 = x_3^m$ in a free group $F$. This can happen only if  $q'_1$ and $q'_3$ are trivial, hence $q_2$ is trivial. Therefore, the intersection of $L_1$ with all three families of conjugates is trivial, so $Height(L_1, G)=3$.

$\hfill\square$
\end{example}

\end{document}